\newtheorem{theorem}{Theorem}
\newtheorem{lemma}[theorem]{Lemma}
\theoremstyle{definition}
\newtheorem{example}[theorem]{Example}
\newcommand{\rd}[1]{{\color{red} #1}}
\def\cprime{$'$}
\newcommand{\p}[1]{\mathcal #1}
\DeclareMathOperator{\RST}{RST}
\newcommand{\Z}{{\mathbb Z}}
\title{
A bijective proof of the hook-length formula for skew shapes}
\author{Matja\v z Konvalinka}%
\address{Faculty of Mathematics and Physics, University of Ljubljana, and Institute of Mathematics, Physics and Mechanics, Ljubljana, Slovenia}
\email{}%
\urladdr{http://www.fmf.uni-lj.si/~konvalinka/}
\thanks{The author acknowledges the financial support from the Slovenian Research Agency (research core funding No. P1-0294).}
\date{\today}
\begin{document}

\begin{abstract}
Recently, Naruse presented a beautiful cancellation-free hook-length formula for skew shapes. The formula involves a sum over objects called \emph{excited diagrams}, and the term corresponding to each excited diagram has hook lengths in the denominator, like the classical hook-length formula due to Frame, Robinson and Thrall.\\
In this paper, we present a simple bijection that proves an equivalent recursive version of Naruse's result, in the same way that the celebrated hook-walk proof due to Greene, Nijenhuis and Wilf gives a bijective (or probabilistic) proof of the hook-length formula for ordinary shapes.\\
In particular, we also give a new bijective proof of the classical hook-length formula, quite different from the known proofs.
\end{abstract}

\maketitle

\section{Introduction} \label{intro}

The celebrated \emph{hook-length formula} gives an elegant product expression for the number of standard Young tableaux (all definitions are given in Section \ref{def}):

$$f^\lambda = \frac{|\lambda|!}{\prod_{u \in [\lambda]} h(u)}.$$

\medskip

The formula also gives dimensions of irreducible representations of the symmetric group, and is a fundamental result in algebraic combinatorics.  The formula was discovered by Frame,
Robinson and Thrall in~\cite{FRT} based on earlier results of Young \cite{You}, Frobenius \cite{Fro} and Thrall \cite{Thr}.  Since then, it has been reproved, generalized and extended in several different ways, and applied in a number of fields ranging from algebraic geometry to probability, and from group theory to the analysis of algorithms.

\medskip

In an important development, Greene, Nijenhuis and Wilf introduced the \emph{hook walk}, which proves a recursive version of the hook-length formula by a combination of a probabilistic and a short induction argument~\cite{GNW}, see also \cite{GNW2}. Zeilberger converted this hook-walk proof into a bijective proof~\cite{Zei}. With time, several variations of the hook walk have been discovered, most notably the $q$-version of Kerov~\cite{Ker1}, and its further generalizations and variations (see~\cite{GH,Ker2}). In \cite{ckp}, a weighted version of the identity is given, with a natural bijective proof in the spirit of the hook-walk proof. Also of note are the bijective proofs of Franzblau and Zeilberger \cite{fz} and Novelli, Pak and Stoyanovskii \cite{nps}. See also \cite{sagan}, \cite{fischer}, \cite{kon} for some proofs of the hook-length formula for shifted tableaux. There are also a great number of proofs of the more general hook-content formula due to Stanley (see e.g.~\cite[Corollary 7.21.4]{ec2}), see for example \cite{rw, krat2, krat3}.

%, and \cite{proctor} and the website \cite{proctorweb} for some information on $d$-complete posets, in which the hook-length formula also holds.

\medskip

There is no (known) product formula for the number of standard Young tableaux of a skew shape, even though some formulas have been known for a long time. For example, \cite[Corollary 7.16.3]{ec2} gives a determinantal formula; we can compute the numbers via Littlewood-Richardson coefficients with the formula
$$f^{\lambda/\mu} = \sum_\nu c_{\mu,\nu}^\lambda f^\nu$$
and there is also a beautiful formula due to Okounkov and Olshanski \cite{oo}. The formula states that
\begin{equation}\label{oo}
f^{\lambda/\mu} = \frac{|\lambda/\mu|!}{\prod_{u \in [\lambda]} h(u)} \sum_{T \in \RST(\mu,\ell(\lambda))} \prod_{u \in [\mu]} (\lambda_{T(u)} - c(u)),
\end{equation}
where $\RST(\mu,\ell)$ is the set of \emph{reverse semistandard tableaux} of shape $\mu$, tableaux with entries $1,\ldots,\ell$ with weakly decreasing rows and strictly decreasing columns, and $c(u) = j -i$ is the \emph{content} of the cell $u = (i,j)$. See also \cite[\S 10.3]{mpp}.

\medskip

In 2014, Hiroshi Naruse \cite{naru} presented and outlined a proof of a remarkable cancellation-free generalization for skew shapes, somewhat similar in spirit to Okounkov-Olshanski's.

\medskip

An \emph{excited move} means that we move a cell of a diagram diagonally (right and down), provided that the cells to the right, below and diagonally down-right are not in the diagram. Let $\p E(\lambda/\mu)$ denote the set of all \emph{excited diagrams} of shape $\lambda/\mu$, diagrams in $[\lambda]$ obtained by taking the diagram of $\mu$ and performing series of excited moves in all possible ways. They were introduced by Ikeda and Naruse \cite{IN1}.

\medskip

Naruse's formula says that

\begin{equation} \label{eq}
 f^{\lambda/\mu} = |\lambda/\mu|! \sum_{D \in \p E(\lambda/\mu)} \prod_{u \in [\lambda] \setminus D} \frac 1{h(u)},
\end{equation}
where all the hook lengths are evaluated in $[\lambda]$.

\medskip

In \cite{mpp}, Morales, Pak and Panova give two different $q$-analogues of Naruse's formula: for the skew Schur functions, and for counting reverse plane partitions of skew shapes. The proofs of the former employ a combination of algebraic and bijective arguments, using the factorial Schur functions and the Hillman-Grassl correspondence. The proof of the latter uses the Hillman-Grassl correspondence and is completely combinatorial. See also \cite{mpp2}.

\medskip

The purpose of this paper is to give a bijective proof of an equivalent, recursive version of Naruse's result, in the same way that the hook walk gives a bijective (or probabilistic) proof of the classical hook-length formula.

\medskip

The bijection is quite easy to explain, and, in particular, gives a new bijective proof of the classical hook-length formula, rather different from the hook-walk proof or the proof due to Novelli-Pak-Stoyanovskii.

\medskip

Our main result (Theorem \ref{thm1}) is the following formula, valid for partitions $\lambda$, $\mu$ and for commutative variables $x_i$, $y_j$:
\begin{equation} \label{eqmain}
\Bigg( \sum_{\substack{\nexists i \colon \lambda_k - k \\= \mu_i - i}} x_k + \sum_{\substack{\nexists j \colon \lambda'_k - k \\= \mu'_j - j}} y_k \Bigg) \sum_{D \in \p E(\lambda/\mu)} \prod_{(i,j) \in D} (x_i + y_j) = \sum_{\mu \lessdot \nu \subseteq \lambda} \sum_{D \in \p E(\lambda/\nu)} \prod_{(i,j) \in D} (x_i + y_j).
\end{equation}

The formula specializes to the recursive version of equation \eqref{eq}. It was pointed out by Morales and Panova (personal communication) that the identity is equivalent to the identity \cite[equation (5.2)]{IN1}. See also \cite{mpp3} and Section \ref{final}.

\medskip

In Section \ref{def}, we give basic definitions and notation. In Section \ref{identity}, we motivate equation \eqref{eqmain} and show how it implies \eqref{eq}. In Section \ref{bijection}, we use a version of the bumping algorithm on tableaux to prove the identity bijectively. In Section \ref{proofs}, we present the proofs of the technical statements from Sections \ref{identity} and \ref{bijection}. We finish with some closing remarks in Section \ref{final}.

\section{Basic definitions and notation} \label{def}

A \emph{partition} is a weakly decreasing finite sequence of positive integers $\lambda = (\lambda_1,\lambda_2,\ldots,\lambda_\ell)$. We call $|\lambda| = \lambda_1 + \cdots + \lambda_\ell $ the \emph{size} of $\lambda$ and $\ell = \ell(\lambda)$ the \emph{length} of $\lambda$. We write $\lambda_i = 0$ for $i > \ell(\lambda)$. The \emph{diagram} of $\lambda$ is $[\lambda] = \{(i,j) \colon 1 \leq i \leq \ell(\lambda), 1 \leq j \leq \lambda_i \}$. We call the elements of $[\lambda]$ the \emph{cells} of $\lambda$. For partitions $\mu$ and $\lambda$, we say that $\mu$ is \emph{contained} in $\lambda$, $\mu \subseteq \lambda$, if $[\mu] \subseteq [\lambda]$. We say that $\lambda/\mu$ is a \emph{skew shape} of size $|\lambda/\mu| = |\lambda| - |\mu|$, and the diagram of $\lambda/\mu$ is $[\lambda/\mu] = [\lambda] \setminus [\mu]$. We write $\mu \lessdot \lambda$ if $\mu \subseteq \lambda$ and $|\lambda/\mu| = 1$. In this case, we also say that $\lambda$ \emph{covers} $\mu$.

\medskip

We often represent a partition $\lambda$ by its \emph{Young diagram}, in which a cell $(i,j) \in [\lambda]$ is represented by a unit square in position $(i,j)$. In this paper, we use English notation, so for example the Young diagram of the partition $\lambda = (6,5,2,2)$ is

\medskip

\begin{center}
\ytableausetup{smalltableaux}
\ydiagram{6,5,2,2}
\end{center}

\medskip

We often omit parentheses and commas, so we could write $\lambda = 6522$.

\medskip

A \emph{corner} of $\lambda$ is a cell that can be removed from $[\lambda]$, i.e., a cell $(i,j) \in [\lambda]$ satisfying $(i+1,j),(i,j+1) \notin [\lambda]$. An \emph{outer corner} of $\lambda$ is a cell that can be added to $[\lambda]$, i.e., a cell $(i,j) \notin [\lambda]$ satisfying $i = 1$ or $(i-1,j) \in [\lambda]$, and $j = 1$ or $(i,j-1) \in [\lambda]$. The \emph{rank} of $\lambda$ is $r(\lambda) = \max\{ i \colon \lambda_i \geq i\}$. The square $[1,r(\lambda)] \times [1,r(\lambda)]$ is called the \emph{Durfee square} of $\lambda$. The partition $6522$ has corners $(1,6)$, $(2,5)$ and $(4,2)$, outer corners $(1,7)$, $(2,6)$, $(3,3)$ and $(5,1)$, and rank $2$.

\medskip

The \emph{conjugate} of a partition $\lambda$ is the partition $\lambda'$ whose diagram is the transpose of $[\lambda]$; in other words, $\lambda'_j = \max \{ i \colon \lambda_i \geq j \}$. For example, for $\lambda = 6522$, we have $\lambda' = 442221$. The \emph{hook length} of the cell $(i,j) \in [\lambda]$ is defined by $h(i,j) = \lambda_i + \lambda'_j - i - j + 1$. For example, the hook length of the cell $(1,2) \in [6522]$ is $8$.

\medskip

\begin{center}
\ytableausetup{smalltableaux}
 \begin{ytableau}
 {} & *(gray) & *(gray) & *(gray) & *(gray) & *(gray) \\
 & *(gray) & & &\\
 & *(gray) \\
 & *(gray) \\
\end{ytableau}
\end{center}

\medskip

The \emph{hook} of a cell $u = (i,j) \in [\lambda]$ is $H(u) = \{(i,j') \colon j \leq j' \leq \lambda_i \} \cup \{(i',j) \colon i \leq i' \leq \lambda'_j\}$. Obviously, we have $|H(u)| = h(u)$. The diagram $[\lambda]$ is the disjoint union of $H(i,i)$, $1 \leq i \leq r(\lambda)$, as illustrated by the following figure.

\medskip

\begin{center}
\begin{tikzpicture}[scale = 0.3]
 \draw (0,0) -- (9,0) -- (9,-1) -- (1,-1) -- (1,-5) -- (0,-5) -- (0,0);
 \draw (6,-1) -- (6,-2) -- (2,-2) -- (2,-4) -- (1,-4);
 \draw (4,-2) -- (4,-3) -- (3,-3) -- (3,-4) -- (2,-4);
\end{tikzpicture}
\end{center}

\medskip

A \emph{standard Young tableau} (or \emph{SYT} for short) of shape $\lambda$ is a bijective map $T \colon [\lambda] \to \{1,\ldots,|\lambda|\}$, $(i,j) \mapsto T_{ij}$, satisfying $T_{ij} < T_{i,j+1}$ if $(i,j),(i,j+1) \in [\lambda]$ and $T_{ij} < T_{i+1,j}$ if $(i,j),(i+1,j) \in [\lambda]$. The number of SYT's of shape $\lambda$ is denoted by $f^\lambda$. The following illustrates $f^{32} = 5$.

\medskip

\begin{center}
\ytableausetup{centertableaux}
\ytableausetup{smalltableaux}
\begin{ytableau}
1 & 2 & 3 \\
4 & 5
\end{ytableau}
\qquad
\begin{ytableau}
1 & 2 & 4 \\
3 & 5
\end{ytableau}
\qquad
\begin{ytableau}
1 & 2 & 5 \\
3 & 4
\end{ytableau}
\qquad
\begin{ytableau}
1 & 3 & 4 \\
2 & 5
\end{ytableau}
\qquad
\begin{ytableau}
1 & 3 & 5 \\
2 & 4
\end{ytableau}
\end{center}

\medskip

The \emph{hook-length formula} gives a product expression for the number of standard Young tableaux:

$$f^\lambda = \frac{|\lambda|!}{\prod_{u \in [\lambda]} h(u)}.$$

For example, $f^{32} = \frac{5!}{4 \cdot 3 \cdot 1 \cdot 2 \cdot 1} = 5$.

\medskip

Analogously, if $\mu \subseteq \lambda$, we can define a standard Young tableau of skew shape $\lambda/\mu$ as a map $T \colon [\lambda/\mu] \to \{1,\ldots,|\lambda/\mu|\}$, $(i,j) \mapsto T_{ij}$, satisfying $T_{ij} < T_{i,j+1}$ if $(i,j),(i,j+1) \in [\lambda/\mu]$ and $T_{ij} < T_{i+1,j}$ if $(i,j),(i+1,j) \in [\lambda/\mu]$. The number of SYT's of shape $\lambda/\mu$ is denoted by $f^{\lambda/\mu}$. The following illustrates $f^{43/2} = 9$:

\begin{center}
\ytableausetup{centertableaux}
\ytableausetup{smalltableaux}
\begin{ytableau}
\none & \none & 1 & 2 \\
3 & 4 & 5
\end{ytableau}
\quad
\begin{ytableau}
\none & \none & 1 & 3 \\
2 & 4 & 5
\end{ytableau}
\quad
\begin{ytableau}
\none & \none & 1 & 4 \\
2 & 3 & 5
\end{ytableau}
\quad
\begin{ytableau}
\none & \none & 1 & 5 \\
2 & 3 & 4
\end{ytableau}
\quad
\begin{ytableau}
\none & \none & 2 & 3 \\
1 & 4 & 5
\end{ytableau}
\quad
\begin{ytableau}
\none & \none & 2 & 4 \\
1 & 3 & 5
\end{ytableau}
\quad
\begin{ytableau}
\none & \none & 2 & 5 \\
1 & 3 & 4
\end{ytableau}
\quad
\begin{ytableau}
\none & \none & 3 & 4 \\
1 & 2 & 5
\end{ytableau}
\quad
\begin{ytableau}
\none & \none & 3 & 5 \\
1 & 2 & 4
\end{ytableau}
\end{center}

\medskip

Suppose that $D \subseteq [\lambda]$. If $(i,j) \in D$, $(i+1,j),(i,j+1),(i+1,j+1) \in [\lambda] \setminus D$, then an \emph{excited move} with respect to $\lambda$ is the replacement of $D$ with $D' = D \setminus \{(i,j)\} \cup \{(i+1,j+1)\}$. If $\mu$ and $\lambda$ are partitions, then an \emph{excited diagram} of shape $\lambda/\mu$ is a diagram contained in $[\lambda]$ that can be obtained from $[\mu]$ with a series of excited moves. Let $\p E(\lambda/\mu)$ denote the set of all excited diagrams of shape $\lambda/\mu$. We have $\p E(\lambda/\mu) = \emptyset$ unless $\mu \subseteq \lambda$. The following shows $\p E(43/2)$.

\medskip

\begin{center}
\ytableausetup{smalltableaux}
 \begin{ytableau}
 *(gray) & *(gray) & & \\
  & &\\
\end{ytableau}
\qquad
\ytableausetup{smalltableaux}
 \begin{ytableau}
 *(gray) & & & \\
  & & *(gray) \\
\end{ytableau}
\qquad
\ytableausetup{smalltableaux}
 \begin{ytableau}
 {} &  & & \\
  & *(gray)&*(gray)\\
\end{ytableau}
\end{center}

\medskip

Naruse's formula says that

\begin{equation*}
 f^{\lambda/\mu} = |\lambda/\mu|! \sum_{D \in \p E(\lambda/\mu)} \prod_{u \in [\lambda] \setminus D} \frac 1{h(u)},
\end{equation*}
where all the hook lengths are evaluated in $[\lambda]$.

\medskip

For example, the formula confirms that
$$f^{43/2} = 5! \left( \frac 1{3 \cdot 1 \cdot 3 \cdot 2 \cdot 1} + \frac 1{4 \cdot 3 \cdot 1 \cdot 3 \cdot 2} + \frac 1{5 \cdot 4 \cdot 3 \cdot 1 \cdot 3} \right) = 9.$$

\section{A polynomial identity} \label{identity}

It is clear that both sides of \eqref{eq} are equal to $1$ if $\lambda = \mu$. Since the minimal entry of a standard Young tableau of shape $\lambda/\mu$ must be in an outer corner of $\mu$ which lies in $\lambda$, we have $f^{\lambda/\mu} = \sum_{\mu \lessdot \nu \subseteq \lambda} f^{\lambda/\nu}$, where $\sum_{\mu \lessdot \nu \subseteq \lambda}$ denotes the sum over all partitions $\nu$ that are contained in $\lambda$ and cover $\mu$. If we show that the right-hand side of \eqref{eq} satisfies the same recursion, we are done. Therefore the statement is equivalent to the following identity:

$$|\lambda/\mu| \sum_{D \in \p E(\lambda/\mu)} \prod_{u \in [\lambda] \setminus D} \frac 1{h(u)} = \sum_{\mu \lessdot \nu \subseteq \lambda} \sum_{D \in \p E(\lambda/\nu)} \prod_{u \in [\lambda] \setminus D} \frac 1{h(u)}.$$

After multiplying by $\prod_{u \in [\lambda]} h(u)$, we get

\begin{equation} \label{eq2}
(|\lambda| - |\mu|) \sum_{D \in \p E(\lambda/\mu)} \prod_{u \in D} h(u) = \sum_{\mu \lessdot \nu \subseteq \lambda} \sum_{D \in \p E(\lambda/\nu)} \prod_{u \in D} h(u).
\end{equation}

\begin{example}
 Take $\mu = 2$ and $\lambda = 43$. There are three excited diagrams:

 \medskip

\begin{center}
\ytableausetup{smalltableaux}
 \begin{ytableau}
 *(gray) & *(gray) & & \\
  & &\\
\end{ytableau}
\qquad
\ytableausetup{smalltableaux}
 \begin{ytableau}
 *(gray) & & & \\
  & & *(gray) \\
\end{ytableau}
\qquad
\ytableausetup{smalltableaux}
 \begin{ytableau}
 {} &  & & \\
  & *(gray)&*(gray)\\
\end{ytableau}
\end{center}

 \medskip

 That means that the left-hand side of \eqref{eq2} equals
 $$(7 - 2) \left( 5 \cdot 4 + 5 \cdot 1 + 2 \cdot 1 \right) = 135.$$
 On the other hand, there are two partitions $\nu$ that cover $\mu$, and together they give three excited diagrams:

 \medskip

 \begin{center}
  \ytableausetup{smalltableaux}
 \ytableausetup{aligntableaux=top}
 \ytableaushort  {\none}  * {4,3}  * [*(gray)]{3}
\qquad\qquad
 \ytableaushort  {\none}  * {4,3}  * [*(gray)]{2,1}
  \qquad\qquad
 \begin{ytableau}
 *(gray) &  &  &   \\
 *(gray)&  & *(gray)
\end{ytableau}
\end{center}

 \medskip

 That means that the right-hand side of \eqref{eq2} equals
 $$5 \cdot 4 \cdot 3 + 5 \cdot 4 \cdot 3 + 5 \cdot 3 \cdot 1 = 135.$$
\end{example}

For $i,j = 1,2,\ldots$, define
\begin{equation} \label{xy}
x_i = \lambda_i - i + \frac 1 2, \qquad y_j = \lambda'_j - j+ \frac 1 2.
\end{equation}

Clearly, for a cell $u = (i,j) \in [\lambda]$, we have $h(u) = \lambda_i + \lambda'_j - i - j + 1 = x_i + y_j$. Furthermore, since $[\lambda]$ is the disjoint union of hooks $H(i,i)$, $1 \leq i \leq r(\lambda)$, we have
\begin{equation} \label{size}
 |\lambda| = x_1 + y_1 + \cdots + x_{r(\lambda)} + y_{r(\lambda)}.
\end{equation}
%as illustrated by the following figure.

%\begin{center}
%\begin{tikzpicture}[scale = 0.4]
% \draw (0,0) -- (9,0) -- (9,-1) -- (1,-1) -- (1,-5) -- (0,-5) -- (0,0);
% \draw (6,-1) -- (6,-2) -- (2,-2) -- (2,-4) -- (1,-4);
% \draw (4,-2) -- (4,-3) -- (3,-3) -- (3,-4) -- (2,-4);
% \draw (0,0) -- (3,-3);
% \node (x1) at (3.4,-0.6) {$x_1$};
% \node (x2) at (3.4,-1.6) {$x_2$};
% \node (x3) at (3.4,-2.6) {$x_3$};
% \node (y1) at (0.5,-3.5) {$y_1$};
% \node (y2) at (1.5,-3.5) {$y_2$};
%\end{tikzpicture}
%\end{center}

\medskip

For $\lambda = 43$, we have  $x_1 = 3 \frac 1 2$, $y_1 = 1 \frac 1 2$, $x_2 = 1 \frac 1 2$, $y_2 = \frac 1 2$, and indeed $|\lambda| = x_1 + y_1 + x_2 + y_2 = 7$.

\medskip

Equation \eqref{eq2} is therefore equivalent to the following:

\begin{equation} \label{eq3}
\left( \sum_{i=1}^{r(\lambda)} (x_i + y_i) - |\mu| \right) \sum_{D \in \p E(\lambda/\mu)} \prod_{(i,j) \in D} (x_i + y_j) = \sum_{\mu \lessdot \nu \subseteq \lambda} \sum_{D \in \p E(\lambda/\nu)} \prod_{(i,j) \in D} (x_i + y_j).
\end{equation}

Note that this is \emph{not} a valid polynomial identity for every $\lambda$, $\mu$: indeed, the right-hand side is a homogeneous polynomial (of degree $|\mu|+1$), while the left-hand side is not (except when $\mu = \emptyset$ or $\mu \not\subseteq \lambda$). It represents a valid identity only for specific values of $x_i$'s and $y_i$'s.

\begin{example}
 Again, take $\mu = 2$ and $\lambda = 43$. The left-hand side of \eqref{eq3} is
 $$(x_1+y_1+x_2+y_2 - 2) \left( (x_1+y_1)(x_1+y_2) + (x_1+y_1)(x_2+y_3) + (x_2+y_2)(x_2+y_3) \right),$$
 and the right-hand side is
 $$(x_1+y_1)(x_1+y_2)(x_1+y_3) + (x_1+y_1)(x_1+y_2)(x_2+y_1) + (x_1+y_1)(x_2+y_3)(x_2+y_1).$$
 These two polynomials are \emph{not} equal, but they both specialize to $135$ when $x_1 = 3 \frac 1 2$, $y_1 = 1 \frac 1 2$, $x_2 = 1 \frac 1 2$, $y_2 = \frac 1 2$, $x_3 = - 1 \frac 1 2$, $y_3 = - \frac 1 2$. Also of note is the fact that the difference between the two polynomials is divisible by $x_2+y_2-2$.
\end{example}

However, we can replace $|\lambda| - |\mu|$ on the left-hand side of equation \eqref{eq2} with a certain \emph{homogeneous} linear polynomial (and $h(u)$ again by $x_i + y_j$ if $u = (i,j)$) and get a valid polynomial identity. This identity specializes to \eqref{eq2} for appropriate values of $x_i$'s and $y_i$'s. The motivation for the result is the following lemma, which we prove in Section \ref{proofs}. The result holds for all $\lambda$, $\mu$, even if $\mu \not\subseteq \lambda$.

\begin{lemma} \label{w}
 For arbitrary partitions $\lambda,\mu$ and $x_k = \lambda_k - k + \frac 1 2$, $y_k = \lambda'_k - k + \frac 1 2$, we have
 $$|\lambda| - |\mu| = \sum_{\substack{\nexists i \colon \lambda_k - k \\= \mu_i - i}} x_k + \sum_{\substack{\nexists j \colon \lambda'_k - k \\= \mu'_j - j}} y_k.$$
\end{lemma}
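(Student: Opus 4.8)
The plan is to encode each partition by a half-integer ``beta-set'' (Maya diagram) and reduce the identity to a standard size formula together with the behaviour of beta-sets under conjugation. For a partition $\nu$, set $M_\nu = \{\nu_k - k + \frac12 \colon k \geq 1\} \subseteq \Z + \frac12$. Since $\nu_k - k$ is strictly decreasing in $k$, the map $k \mapsto \nu_k - k + \frac12$ is a bijection onto $M_\nu$; in the notation of the lemma the numbers $x_k$ are exactly the elements of $M_\lambda$ and the numbers $y_k$ the elements of $M_{\lambda'}$. The condition $\nexists i\colon \lambda_k - k = \mu_i - i$ says precisely that $x_k \notin M_\mu$, i.e. $x_k \in M_\lambda \setminus M_\mu$; hence the first sum on the right-hand side equals $\sum_{p \in M_\lambda \setminus M_\mu} p$, and likewise the second sum equals $\sum_{p \in M_{\lambda'} \setminus M_{\mu'}} p$. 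Note that $\nu_k = 0$ for large $k$ forces $M_\nu$ to contain all sufficiently negative half-integers, so these symmetric differences are finite and the sums converge.

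With this dictionary, the lemma becomes the assertion
$$|\lambda| - |\mu| = \sum_{p \in M_\lambda \setminus M_\mu} p + \sum_{p \in M_{\lambda'} \setminus M_{\mu'}} p.$$
The first ingredient I would prove is the size formula $|\lambda| - |\mu| = \sum_{p \in M_\lambda \setminus M_\mu} p - \sum_{q \in M_\mu \setminus M_\lambda} q$, valid for arbitrary partitions (no containment needed). I would establish this by induction on $|\lambda| + |\mu|$, or more transparently by connecting $M_\mu$ to $M_\lambda$ through a sequence of elementary moves, each of which slides one particle from a position $p$ to the empty position $p+1$ and so corresponds to adjoining a single cell. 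Such a move changes $|\lambda|$ by $+1$, and a short check of the few possible local configurations of $M_\mu$ near $p$ shows that $\sum_{p \in M_\lambda \setminus M_\mu} p - \sum_{q \in M_\mu \setminus M_\lambda} q$ also changes by exactly $+1$; since the two sides agree when $\lambda = \mu$, the formula follows.

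The second ingredient is the conjugation relation for beta-sets: $M_{\nu'} = -\bigl((\Z + \tfrac12) \setminus M_\nu\bigr)$, equivalently $p \in M_{\nu'} \iff -p \notin M_\nu$. This is the classical statement that the complement of the beta-set of $\nu$ is the reflected beta-set of $\nu'$, and it follows directly from $\nu'_j = \max\{i \colon \nu_i \geq j\}$. Applying it to both $\lambda$ and $\mu$, the substitution $q = -p$ gives $p \in M_{\lambda'} \setminus M_{\mu'} \iff q \in M_\mu \setminus M_\lambda$, whence $\sum_{p \in M_{\lambda'} \setminus M_{\mu'}} p = -\sum_{q \in M_\mu \setminus M_\lambda} q$. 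Substituting this and the size formula into the displayed identity completes the proof. The main obstacle is the size formula: getting the signs and the local case analysis right when a particle is moved is the only genuinely delicate point, whereas the reinterpretation of the two sums and the conjugation relation are essentially bookkeeping once the beta-set language is in place.
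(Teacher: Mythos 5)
Your proof is correct, but it is packaged quite differently from the paper's. The paper never introduces beta-sets explicitly: it starts from the Durfee/hook decomposition $|\lambda| = \sum_{i \leq r(\lambda)}(x_i + y_i)$ (and likewise for $\mu$), then traces a staircase lattice path through the cells near the diagonal $\{(k+l,\lambda_k+l)\}$ to show that each $x_k$ equals either $x'_i = \mu_i - i + \tfrac12$ for a unique $i$ or $-y'_j = -(\mu'_j - j + \tfrac12)$ for a unique $j$, but never both (and symmetrically for $y_k$), and finishes by bookkeeping the coefficient of each $x_k$ and $y_k$ in $|\lambda|-|\mu|$. In your language, that exclusive dichotomy is exactly the complementation $M_{\mu'} = -\bigl((\Z+\tfrac12)\setminus M_\mu\bigr)$, and the hook decomposition is the $\mu=\emptyset$ case of your size formula; so the two arguments rest on the same classical facts, but yours routes everything through the standard Maya-diagram formalism (size formula proved by single-particle moves, plus conjugation as set complementation), whereas the paper's is a self-contained, hands-on cell-path argument. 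Your version is arguably cleaner and makes the structure more transparent to a reader who knows beta-sets; the paper's avoids importing that machinery. Your four-case local check for a single particle move does work out (each move changes both $|\lambda|$ and the signed sum $\sum_{M_\lambda\setminus M_\mu}p-\sum_{M_\mu\setminus M_\lambda}q$ by $+1$). The one point you should make explicit is how you connect $M_\mu$ to $M_\lambda$ when $\mu \not\subseteq \lambda$ (the lemma is stated for arbitrary pairs, and the paper stresses this): you need either backward moves as well, or to pass through $\emptyset$ and invoke the telescoping identity $\sigma(A,C)=\sigma(A,B)+\sigma(B,C)$ for the signed symmetric-difference sum. That is routine, but as written the chain of elementary moves is only guaranteed to exist in the containment case.
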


Note that while $k$, $i$ and $j$ appearing in the sums can be arbitrarily large, the summation is finite since we have $\lambda_k - k = \mu_k -k = -k$ and $\lambda'_k -k = \mu'_k -k = -k$ for large $k$.

\begin{example}
 We continue with the previous example, i.e., take $\mu = 2$ and $\lambda = 43$. We have
 $$ \begin{array}{rlrl}
 (\lambda_k-k)_{k \geq 1} =& (\underline{3},1,-3,-4,-5,\ldots), & (\mu_i-i)_{i \geq 1} = & (1,-2,-3,-4,\ldots), \\
 (\lambda'_k-k)_{k \geq 1} =& (\underline 1,0,-1,-3,-5,-6,-7,\ldots), & (\mu'_j-j)_{j \geq 1} = & (0,-1,-3,-4,-5,\ldots),
 \end{array}$$
 where elements of $(\lambda_k-k)_{k \geq 1}$ and $(\lambda'_k-k)_{k \geq 1}$ are underlined if they do not appear in $(\mu_i-i)_{i \geq 1}$ and $(\mu'_j-j)_{j \geq 1}$. Indeed, $|\lambda| - |\mu| = x_1 + y_1 = 5$.\\
 Similarly, for $\mu = 431$ and $\lambda = 765521$, we have
 $$ \begin{array}{rlrl}
 (\lambda_k-k)_{k \geq 1} =& (\underline 6,\underline 4,\underline 2,1,\underline{-3},-5,-7,-8,\ldots), & (\mu_i-i)_{i \geq 1} = & (3,1,-2,-4,-5,\ldots), \\
 (\lambda'_k-k)_{k \geq 1} =& (\underline 5,\underline 3,\underline 1,0,-1,\underline{-4},-6,-8,-9,\ldots), & (\mu'_j-j)_{j \geq 1} = & (2,0,-1,-3,-5,-6,\ldots),
 \end{array}$$
 and $|\lambda| - |\mu| = x_1 + x_2 + x_3 + x_5 + y_1 + y_2 + y_3 + y_6 = 18$.
\end{example}

The following theorem is our main result. It is a subtraction-free polynomial identity, which, by Lemma \ref{w}, specializes to equation \eqref{eq2} when $x_i = \lambda_i - i + \frac 1 2$ and $y_j = \lambda'_j - j + \frac 1 2$, and therefore implies the hook-length formula for skew diagrams.

\begin{theorem} \label{thm1}
For arbitrary partitions $\lambda$, $\mu$ and commutative variables $x_i,y_j$, we have
\begin{equation} \label{eq4}
\Bigg( \sum_{\substack{\nexists i \colon \lambda_k - k \\= \mu_i - i}} x_k + \sum_{\substack{\nexists j \colon \lambda'_k - k \\= \mu'_j - j}} y_k \Bigg) \sum_{D \in \p E(\lambda/\mu)} \prod_{(i,j) \in D} (x_i + y_j) = \sum_{\mu \lessdot \nu \subseteq \lambda} \sum_{D \in \p E(\lambda/\nu)} \prod_{(i,j) \in D} (x_i + y_j).
\end{equation}
\end{theorem}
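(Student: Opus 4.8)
The plan is to prove \eqref{eq4} as an identity between two positive (subtraction-free) sums of monomials. After fully expanding both products, each side becomes a sum of monomials of degree $|\mu|+1$ in the variables $x_i,y_j$, so it suffices to exhibit a weight-preserving bijection between the terms on the two sides, the weight of a term being the monomial it produces. I would first describe the two families of objects. On the left, a term is specified by a triple $(D,\varphi,w)$: an excited diagram $D\in\p E(\lambda/\mu)$; a \emph{labeling} $\varphi$ assigning to each cell $(i,j)\in D$ one of its two variables $x_i$ or $y_j$; and one extra variable $w$, either some $x_k$ with $\lambda_k-k\neq\mu_i-i$ for all $i$, or some $y_k$ with $\lambda'_k-k\neq\mu'_j-j$ for all $j$. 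Its weight is $w\cdot\prod_{(i,j)\in D}\varphi(i,j)$. On the right, a term is a triple $(\nu,D',\varphi')$ with $\mu\lessdot\nu\subseteq\lambda$, an excited diagram $D'\in\p E(\lambda/\nu)$ (which has $|\mu|+1$ cells), and a labeling $\varphi'$ of $D'$, of weight $\prod_{(i,j)\in D'}\varphi'(i,j)$. Both weights are degree-$(|\mu|+1)$ monomials, and the goal is a bijection matching them.

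Next I would encode labeled excited diagrams as tableaux, using the correspondence between $\p E(\lambda/\mu)$ and flagged fillings of shape $\mu$ that record, for each cell of $\mu$, how far it has been pushed along its diagonal (an excited move sends $(i,j)$ to $(i+1,j+1)$, so each cell of $\mu$ is tracked by its diagonal displacement, subject to the monotonicity and flag bounds coming from $[\lambda]$ that make the configuration a legal excited diagram). The labeling $\varphi$ refines this by recording, cell by cell, whether that cell contributes its row index ($x$) or its column index ($y$). In this language the extra variable $w=x_k$ (respectively $y_k$), together with its side condition, becomes the data of an element to be \emph{inserted} using row $k$ (respectively column $k$); the condition $\lambda_k-k\neq\mu_i-i$ for all $i$ (respectively $\lambda'_k-k\neq\mu'_j-j$ for all $j$) singles out exactly the diagonals along which such an insertion can legally be initiated.

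The heart of the argument is an insertion (bumping) map sending $(D,\varphi,w)$ to some $(\nu,D',\varphi')$. I would insert $w$ by a Schensted-type cascade along a diagonal: the incoming variable displaces a cell, whose variable is carried forward keeping the same row- or column-index (so the running product is unchanged), which displaces the next cell, and so on, until a new cell is appended. The terminal configuration is a labeled excited diagram of shape $\lambda/\nu$, where $\nu$ is obtained from $\mu$ by adding the single outer corner dictated by the cascade. I would then check the three things needed for well-definedness: that the cascade terminates; that the output respects the excited-diagram constraints and flag bounds, so that $D'\in\p E(\lambda/\nu)$ for a genuine cover $\nu\subseteq\lambda$; and that the weight is preserved by construction. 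Lemma \ref{w} is exactly what guarantees that, under the specialization $x_i=\lambda_i-i+\tfrac12$, $y_j=\lambda'_j-j+\tfrac12$, the linear factor on the left of \eqref{eq4} collapses to $|\lambda|-|\mu|$, so that the polynomial identity degenerates to \eqref{eq2} and hence to Naruse's formula; but \eqref{eq4} itself must be established at the level of the bijection.

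I expect the main obstacle to be invertibility together with the exact matching of the side conditions. For the reverse map, given $(\nu,D',\varphi')$ I would identify the distinguished cell tracking the added corner $\nu/\mu$, run the bumping backwards to peel off a single variable $w$, and recover $(D,\varphi)$ of shape $\lambda/\mu$; the delicate points are that every cover $\nu$ and every labeled $D'$ is reached exactly once, and that the set of legal extra variables $x_k,y_k$ on the left corresponds precisely, under reverse bumping, to the diagonals from which a cell can be uninserted. Verifying that the (somewhat global) excited-move constraints are preserved in both directions, and that these constraints line up exactly with the ``$\nexists i$'' conditions, is where the real work lies; once the local bumping step is shown to be reversible and constraint-preserving, the bijection — and hence Theorem \ref{thm1} — follows.
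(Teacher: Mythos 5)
Your setup matches the paper's: you encode excited diagrams as flagged fillings of $[\mu]$ recording diagonal displacements, refine them by an $x$/$y$ choice per cell (the paper's bicolored tableaux), and define a Schensted-type bumping insertion of the extra variable that preserves the running monomial at each displacement. Up to that point you are reconstructing the paper's map $\psi_\mu$. But there is a genuine gap at the step you list as ``that the output respects the excited-diagram constraints and flag bounds, so that $D'\in\p E(\lambda/\nu)$ for a genuine cover $\nu\subseteq\lambda$.'' This is simply false for a single pass of the insertion: the bumping cascade can terminate by appending a corner whose entry violates the flag bound $j+T_{ij}\le\lambda_{i+T_{ij}}$, or by adding an outer corner of $\mu$ not lying in $[\lambda]$, even when the input tableau satisfies all bounds and the inserted variable satisfies the $\nexists$-conditions. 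The paper exhibits exactly such an example (inserting $y_1$ into a tableau of $\p B(431,765521)$ lands in $\p B(432)\setminus\p B(432,765521)$). So the single insertion is only a bijection between the \emph{unrestricted} sets $\p B(\mu)\times\{x_1,y_1,\dots\}$ and $\bigcup_{\mu\lessdot\nu}\p B(\nu)$, which is not what the identity requires.

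The missing idea is an iteration in the style of sieve equivalence (or the Garsia--Milne involution principle): when the output violates the bounds, remove the offending corner entry to recover a new variable and a shape-$\mu$ tableau, re-insert, and repeat until the output is legal. Making this work requires the genuinely delicate lemma of the paper, namely that the removal map, restricted to the ``bad'' outputs, is a bijection onto the ``bad'' inputs $\p B(\mu,\lambda)\times(\p W(\lambda)\setminus\p W(\mu,\lambda))$ --- this is where the condition $\nexists i\colon\lambda_k-k=\mu_i-i$ actually enters, as the precise characterization of which variables get peeled off a bound-violating corner, not as a condition on where an insertion ``can legally be initiated'' (the unrestricted insertion accepts every variable). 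Without the iterated process and this matching of the two exceptional sets, the proposed one-pass bijection does not exist, and the proof does not close.
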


The theorem is trivially true for $\mu \not\subseteq \lambda$, as then both sides are equal to $0$.

\begin{example}
 For $\mu = 2$ and $\lambda = 43$, we have the following identity (valid for commutative variables $x_1$, $y_1$, $x_2$, $y_2$, $x_3$, $y_3$).
 \begin{multline*}
 (x_1+y_1) \left( (x_1+y_1)(x_1+y_2) + (x_1+y_1)(x_2+y_3) + (x_2+y_2)(x_2+y_3) \right) \\
  = (x_1+y_1)(x_1+y_2)(x_1+y_3) + (x_1+y_1)(x_1+y_2)(x_2+y_1) + (x_1+y_1)(x_2+y_3)(x_2+y_1).
 \end{multline*}
 For $\mu = 431$ and $\lambda = 765521$, the first term on the left is $x_1 + x_2 + x_3 + x_5 + y_1 + y_2 + y_3 + y_6$, the second term is a sum of $14080$ monomials, and the right-hand side is a sum of $112640$ monomials.
\end{example}

The (bijective) proof of Theorem \ref{thm1} is the content of the next section.

\section{The bijection} \label{bijection}

First, we interpret the two sides of equation \eqref{eq4} in terms of certain tableaux.

\medskip

To motivate the definition, look at the following excited diagram for $\mu = 431$ and $\lambda = 765521$.

\medskip

\begin{center}
   \ytableausetup{smalltableaux}
 \ytableausetup{aligntableaux=top}
 \begin{ytableau}
 *(gray) & *(gray) &  &  &  &  & \\
 *(gray)&  &  & *(gray) & *(gray) &\\
 & & *(gray) & & \\
 & *(gray) & & & *(gray) \\
 & \\
 \\
 \end{ytableau}
\end{center}

\medskip

Instead of actually moving the cells of $\mu$, write an integer in a cell of $\mu$ that indicates how many times it moves (diagonally) from the original position. For the above example, we get the following tableau of shape $\mu = 431$.

\medskip

\begin{center}
   \ytableausetup{boxsize=1.25em}
 \ytableausetup{aligntableaux=top}
 \ytableaushort
{0011,012,1}
\end{center}

\medskip

It is easy to see that the (non-negative integer) entries of the resulting tableau are weakly increasing along rows and columns (in other words, that the tableau is a \emph{reverse plane partition}): for example, if one cell is to the left of another, we cannot make an excited move on it until we make an excited move on its right neighbor. Also, every tableau with non-negative integer entries and weakly increasing rows and columns corresponds to a valid excited diagram, provided that the entry $r$ in row $i$ and column $j$ satisfies
\begin{equation} \label{crit}
j + r \leq \lambda_{i+r}.
\end{equation}
Furthermore, it is enough to check this inequality only for the corners of $\mu$. See also \emph{flagged tableaux} in \cite[\S 3.2]{mpp}.

\medskip

The contribution  $\prod_{(i,j) \in D} (x_i + y_j)$ of an excited diagram $D$ can be written as
$$\prod_{(i,j) \in [\mu]} (x_{i+T_{ij}} + y_{j+T_{ij}}),$$
where $T$ is the corresponding tableau of shape $\mu$ with non-negative integer entries and weakly increasing rows and columns. To extract the monomials from the product, choose either $x_{i+T_{ij}}$ or $y_{j+T_{ij}}$ for each $(i,j) \in [\mu]$. Write the number $T_{ij}$ in position $(i,j)$ in black if we choose $x_{i+T_{ij}}$, and in red if we choose $y_{j+T_{ij}}$. Call a tableau with non-negative integer black or red entries and weakly increasing rows and columns a \emph{bicolored tableau}. Denote by $\p B(\mu)$ the (infinite unless $\mu = \emptyset$) set of bicolored tableaux of shape $\mu$, and denote by $\p B(\mu,\lambda)$ the (finite) set of bicolored tableaux $T$ of shape $\mu$ that satisfy $j + T_{ij} \leq \lambda_{i+T_{ij}}$ for all $(i,j) \in [\mu]$.

\medskip

The weight of a bicolored tableau $T$ of shape $\mu$ is
$$w(T) = \prod_{(i,j) \in b(T)} x_{i + T_{ij}} \prod_{(i,j) \in [T] \setminus b(T)} y_{j + T_{ij}},$$
where $b(T)$ is the set of cells containing black entries of $T$.

\begin{example}
 The following are some bicolored tableaux in $\p B(431)$. A bicolored tableau is in $\p B(431,765521)$ if and only if $T_{14} \leq 1$, $T_{23} \leq 2$, $T_{31} \leq 1$, so the first three are in $\p B(431,765521)$ and the last one is not.

 \medskip

 \begin{center}
   \ytableausetup{boxsize=1.25em}
 \ytableausetup{aligntableaux=top}
 \ytableaushort
{0{\rd 0}00,01{\rd 1},0}
\qquad \qquad \ytableaushort
{{\rd 0}0 {\rd 1}1,0{\rd 2}2,{\rd 1}}
\qquad \qquad \ytableaushort
{{\rd 1}{\rd 1}{\rd 1}1,{\rd 1}{\rd 2}2,1}
\qquad \qquad \ytableaushort
{01{\rd 1}1,2{\rd 2}2,{\rd 2}}
\end{center}

\medskip

\noindent
The weights of these tableaux are $x_1^3x_2y_2x_3^2y_4$, $x_1y_1x_2^2y_2x_4y_4^2$, $x_2y_2^2y_3x_4^2y_4^2$, and $x_1x_2^2y_3x_4^2y_4^2$, respectively.
\end{example}

\medskip

We are ready to interpret both sides of equation \eqref{eq4}. The left-hand side is the enumerator of the Cartesian product $\p B(\mu,\lambda) \times \p W(\mu,\lambda)$, where
$$\p W(\mu,\lambda) = \{x_k \colon \lambda_k - k \neq \mu_i - i \mbox{ for all } i\} \cup \{y_k \colon \lambda'_k - k \neq \mu'_j - j \mbox{ for all } j\},$$
and the pair $(T,z)$ has weight $w(T)z$. The right-hand side is the enumerator (with respect to weight $w$) of the set $\bigcup_{\nu} \p B(\nu,\lambda)$, where the union is over all partitions $\nu$ that cover $\mu$ and are contained in $\lambda$.

\medskip

In the remainder of this section, we present a weight-preserving bijection between the two sides.

\medskip

The map is a natural bumping algorithm. To describe it, we first describe the \emph{insertion process}: the process of inserting a variable $z \in \{x_1,y_1,x_2,y_2,\ldots \}$ into a bicolored tableau $T$ of shape $\mu$.

\medskip

After some number of steps, $i$, $j$, $w$ and $S$ have certain values; in the beginning, $i = j = 0$, $w = z$ and $S = T$. If $w = x_k$, increase $j$ by $1$ (i.e., move to the next column) and find the largest possible $i$ (which can also be $\mu_j' + 1$ if $j = 1$ or $\mu'_j < \mu'_{j-1}$) so that we can replace $S_{ij}$ by a black $k - i$ in position $(i,j)$ and still have a weakly increasing column with non-negative integers (such an $i$ always exists, as we will see in Section \ref{proofs}). If, on the other hand, $w = y_k$, increase $i$ by $1$ (i.e., move to the next row) and find the largest possible $j$ (which can also be $\mu_i + 1$ if $i = 1$ or $\mu_i < \mu_{i-1}$) so that we can replace $S_{ij}$ by a red $k - j$ in position $(i,j)$ and still have a weakly increasing row with non-negative integers.

\medskip

Let $w$ denote the weight of the old $S_{ij}$ (i.e., $x_{i+S_{ij}}$ if $S_{ij}$ is black and $y_{j+S_{ij}}$ if $S_{ij}$ is red). Continue with the procedure until $(i,j)$ is an outer corner of $\mu$, and $S$ is a bicolored tableau of some shape $\nu$ which covers $\mu$. The procedure returns this final $S$, which we denote by $\psi_\mu(T,z)$.

\begin{example} \label{ex}
 Take $\mu = 431$, the bicolored tableau
% \begin{center}
$$T =     \ytableausetup{boxsize=1.25em}
 \ytableausetup{aligntableaux=center}
 \ytableaushort
{0{\rd 0}{\rd 0}1,01{\rd 1},0}$$
% \end{center}
 and $z = y_1$. Since we are inserting a $y$-variable, we insert it into the first row. The variable $y_1$ can only be represented by a red $0$ in the first column, so we write a red $0$ in position $(1,1)$, and the variable bumped out is $x_1$ (represented by the black $0$ that was in position $(1,1)$ originally). Since this is an $x$-variable, we move to the right, and insert it into the second column. The variable $x_1$ can only be represented by a black $0$ in the first row, so we write a black $0$ in position $(1,2)$, and the variable bumped out is $y_2$ (represented by the red $0$ that was in position $(1,2)$ before). We have to insert it into the second row, either as a red $1$ in position $(2,1)$ or a red $0$ in position $(2,2)$. Of course, a red $1$ in position $(2,1)$ would give a decrease in column $1$, so we insert it in position $(2,2)$, and bump out a black $1$, representing $x_3$. We insert $x_3$ in column $3$, either as a black $2$ in row $1$ (but which makes the entry in $(3,1)$ larger than the entry in $(3,2)$) or as a black $1$ in row $2$. Thus we write a black $1$ in position $(3,2)$ and bump out the red $1$ representing $y_4$. We move to the next row: we can either write a red $3$ in position $(3,1)$ or a red $2$ in position $(3,2)$. Both are possible, so we pick the latter option. Now $(i,j) = (3,2)$ is an outer corner of $\mu$, so we terminate the insertion process. The final bicolored tableau is
 $$\psi_{431}(T,y_1) =     \ytableausetup{boxsize=1.25em}
 \ytableausetup{aligntableaux=center}
 \ytableaushort
{{\rd 0}0{\rd 0}1,0{\rd 0}1,0 {\rd 2}}.$$
Figure \ref{path} illustrates the insertion process. Two numbers in a cell mean that the number on the left is bumping the number on the right.
\end{example}

\begin{figure}[h]
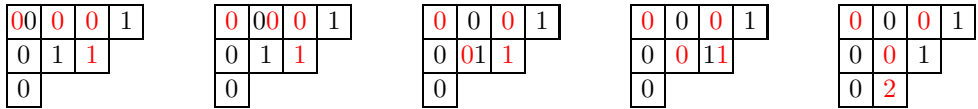

\ytableausetup{boxsize=1.25em}
 \ytableausetup{aligntableaux=center}
 \ytableaushort
{{{\rd 0}0}{\rd 0}{\rd 0}1,01{\rd 1},0} \qquad
\ytableausetup{boxsize=1.25em}
 \ytableausetup{aligntableaux=center}
 \ytableaushort
{{\rd 0}{0 \rd 0}{\rd 0}1,01{\rd 1},0} \qquad
\ytableausetup{boxsize=1.25em}
 \ytableausetup{aligntableaux=center}
 \ytableaushort
{{\rd 0}0{\rd 0}1,0{{\rd 0}1}{\rd 1},0} \qquad
\ytableausetup{boxsize=1.25em}
 \ytableausetup{aligntableaux=center}
 \ytableaushort
{{\rd 0}0{\rd 0}1,0{\rd 0}{1 \rd 1},0}\qquad
\ytableausetup{boxsize=1.25em}
 \ytableausetup{aligntableaux=center}
 \ytableaushort
{{\rd 0}0{\rd 0}1,0{\rd 0}1,0{\rd 2}}
\caption{The insertion process from Example \ref{ex}.} \label{path}
\end{figure}

\begin{theorem} \label{insert}
 The insertion process described above always terminates and is a weight-preserving bijection
 $$\psi_\mu \colon \p B(\mu) \times \{x_1,y_1,x_2,y_2,\ldots \} \longrightarrow \bigcup_{\nu} \p B(\nu),$$
 where the union is over all partitions $\nu$ which cover $\mu$.
\end{theorem}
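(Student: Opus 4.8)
The plan is to establish three things about $\psi_\mu$: that the insertion process is well-defined and halts by depositing its carried variable in an outer corner of $\mu$ (so the output lies in $\p B(\nu)$ for some $\nu$ with $\mu \lessdot \nu$), that it preserves weight, and that it is a bijection; I will obtain the last point by exhibiting an explicit inverse. I would dispose of weight-preservation first, since it is the easiest part and fixes the right bookkeeping. Track the pair $(S,w)$ through the process and consider the monomial $w(S)\cdot w$. A non-terminal step replaces the entry in a cell $(i,j)$ of weight $v$ by an entry of weight equal to the current $w$, and then resets $w := v$; hence the product $w(S)\cdot w$ is unchanged. At the terminal step the carried variable is deposited in a previously empty outer corner with nothing bumped out, so the final tableau has weight exactly $w(T)\cdot z$. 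Thus $w(\psi_\mu(T,z)) = w(T)\,z$, which is the weight-preservation claim.

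The heart of the argument, and the step I expect to be the main obstacle, is showing that the forward process is well-defined and terminates correctly. Here I would prove, by induction on the number of steps, a package of invariants: (i) at an $x$-step the largest admissible row $i$ exists and the entry $k-i$ placed there is non-negative (and dually for $y$-steps); (ii) after each step $S$ is again a genuine bicolored tableau, i.e.\ weakly increasing along \emph{both} rows and columns; and (iii) the bumping path progresses through a controlled, finite region. The subtle point in (ii) is that the algorithm only \emph{explicitly} enforces monotonicity of the column being modified on an $x$-step (resp.\ the row on a $y$-step); one must show that choosing the \emph{largest} admissible position forces the perpendicular monotonicity to hold automatically, which is exactly where extremality of the choice is used.

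For termination, note that before the final step $S$ has shape $\mu$, so every modified cell lies in the finite set $[\mu]$ together with its outer corners. I would exhibit a monotonicity invariant of the path, e.g.\ that the quantity $i+j$ strictly increases at each non-terminal step, which bounds the number of non-terminal steps and prevents the path from stalling or repeating. Consequently the process must halt, and it can only halt by placing the carried variable at an outer corner of $\mu$, leaving $S$ of shape $\nu$ with $\mu \lessdot \nu$, as required for the codomain.

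Finally, to prove bijectivity I would construct the inverse by reverse bumping. Given $S \in \p B(\nu)$ with $\mu \lessdot \nu$, there is a unique cell $c = \nu/\mu$, necessarily an outer corner of $\mu$; its entry is the final carried variable, and its colour records whether the last forward step was an $x$- or a $y$-step. Removing this entry and running the extremal bumping backwards — at each stage the colour of the carried variable tells us whether the step to be undone was an $x$-step or a $y$-step, and the mirror-extremal rule then locates the unique cell to un-bump, the displaced entry becoming the next carried variable — eventually exits the diagram, at which point the remaining carried variable is $z$ and the restored tableau is $T$. The same invariants from the previous two paragraphs guarantee that every reverse step is forced (the extremal forward choice is undone by the mirror-extremal reverse choice), so $\psi_\mu$ and this reverse map are mutually inverse; combined with the weight computation, this shows that $\psi_\mu$ is a weight-preserving bijection onto $\bigcup_\nu \p B(\nu)$.
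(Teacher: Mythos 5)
Your overall architecture --- well-definedness and column/row monotonicity via extremality of the chosen position, termination via a bounded monotone quantity, bijectivity via an explicit reverse-bumping inverse, and weight-preservation by tracking $w(S)\cdot w$ --- is exactly the paper's. The weight-preservation argument is fine. But the one place where you commit to a concrete device, the termination invariant, is wrong: $i+j$ does \emph{not} strictly increase at each non-terminal step. When a black $r$ is bumped from $(i',j-1)$ into column $j$, the extremal rule only guarantees $i\leq i'$ for the new row; the row index can jump \emph{up} by more than one. For instance, if column $j$ reads $(0,0,1)$ in column $j-1$ and $(4,4,4)$ in column $j$, a black $1$ bumped from $(3,j-1)$ (representing $x_4$) must land as a black $3$ in row $1$ of column $j$, since a black $4-i$ in rows $2$ or $3$ would sit above a $4$; here $i+j$ drops from $(j-1)+3$ to $j+1$. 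So your potential can stall or decrease, and the termination proof collapses. The fix is to add the bumped value to the potential: the paper uses $i+j+s$, where $s$ is the entry currently being bumped, and shows it increases by at least $1$ because the entry displaced at $(i,j)$ satisfies $S_{ij}\geq r+i'-i$, i.e.\ a value is never bumped by a strictly larger value. That inequality is also what you need (and do not quite state) for the perpendicular monotonicity in your invariant (ii).

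Two smaller points. First, your inverse is described as using a ``mirror-extremal'' rule; in the paper the reverse bumping uses the \emph{same} extremal rule (again the largest admissible row for an $x$-variable, now one column to the left), not the opposite one, so you should be careful which extremum you take or the reverse step will not undo the forward step. Second, you assert but do not argue that the process can only exit $[\mu]$ at an outer corner; this requires checking that the extremal choice never selects position $(\mu'_j+1,j)$ unless $\mu'_{j-1}>\mu'_j$ (and dually), which follows from $r\leq S_{\mu'_j,j}+\mu'_j-i'$ when $\mu'_{j-1}=\mu'_j$. With the corrected potential and these two repairs, your outline matches the paper's proof.
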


The theorem is proved in Section \ref{proofs}.

\medskip

Of course, the bijection does not necessarily restrict to a bijection from $\p L(\mu,\lambda) = \p B(\mu,\lambda) \times \p W(\mu,\lambda)$ to $\p R(\mu,\lambda) = \bigcup_{\nu} \p B(\nu,\lambda)$, and does not immediately prove Theorem \ref{thm1}. Once we insert a variable from $\p W(\mu,\lambda)$ into a bicolored tableau in $\p B(\mu,\lambda)$, the resulting tableau can add an outer corner of $\mu$ which is not in $[\lambda]$, or it can return a bicolored tableau in $\p B(\nu)$, $\nu \subseteq \lambda$, which is not in $\p B(\nu, \lambda)$. For instance, the last example produced a tableau in $\p B(432) \setminus \p B(432,765521)$.

\medskip

If $\psi_\mu(T,z) \in \p B(\nu)$ is not in $\p R(\mu,\lambda)$, we can remove the entry in the unique cell in $[\nu/\mu]$ and obtain a new variable $z'$ and a tableau $T'$ of shape $\mu$. Compute  $\psi_\mu(T',z') \in \p B({\nu}')$. If it is in $\p R(\mu,\lambda)$, terminate the procedure, otherwise remove the entry in the unique cell in $[{\nu}'/\mu]$ and obtain a new variable $z''$ and a tableau $T''$ of shape $\mu$. Continue until the computed tableau is in $\p R(\mu,\lambda)$; the procedure returns this tableau as the result. We call this the \emph{repeated insertion process}.

\begin{example}
 Take $\mu = 431$, $\lambda = 765521$,
 $$T =     \ytableausetup{boxsize=1.25em}
 \ytableausetup{aligntableaux=center}
 \ytableaushort
{0{\rd 0}{\rd 0}1,01{\rd 1},0}
\in \p B(431,765521)$$
 and $z = y_1 \in \p W(431,765521)$. We already computed
 $$\psi_{431}(T,z) = \ytableausetup{boxsize=1.25em}
 \ytableausetup{aligntableaux=center}
 \ytableaushort
{{\rd 0}0{\rd 0}1,0{\rd 0}1,0 {\rd 2}} \in \p B(432) \setminus \p B(432,765521).$$
 Remove the red $2$ from position $(3,2)$, and insert $z' = y_4$ into the tableau
 $$T' = \ytableausetup{boxsize=1.25em}
 \ytableausetup{aligntableaux=center}
 \ytableaushort
{{\rd 0}0{\rd 0}1,0{\rd 0}1,0 }.$$
The result is
$$\psi_{431}(T',z') = \ytableausetup{boxsize=1.25em}
 \ytableausetup{aligntableaux=center}
 \ytableaushort
{{\rd 0}0{\rd 0}{\rd 0}1,0{\rd 0}1,{\rd 0}},$$
which is an element of $\p B(531,765521) \subseteq \p R(431,765521)$. Therefore the procedure terminates and returns
$$\ytableausetup{boxsize=1.25em}
 \ytableausetup{aligntableaux=center}
 \ytableaushort
{{\rd 0}0{\rd 0}{\rd 0}1,0{\rd 0}1,{\rd 0}}.$$
\end{example}

\begin{theorem} \label{repeated}
 The repeated insertion process described above always terminates and is a weight-preserving bijection
 $$\Psi_{\mu,\lambda} \colon \p B(\mu,\lambda) \times \p W(\mu,\lambda) \longrightarrow \bigcup_{\nu} \p B(\nu,\lambda),$$
 where the union is over all partitions $\nu$ which cover $\mu$ and are contained in $\lambda$.
\end{theorem}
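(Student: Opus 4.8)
The plan is to prove Theorem \ref{repeated} by iterating the bijection $\psi_\mu$ of Theorem \ref{insert} and ``bouncing'' off the boundary imposed by $\lambda$. Write $\rho_\mu$ for the \emph{reduction} map that sends a tableau $S \in \p B(\nu)$, $\mu \lessdot \nu$, to the pair $(T',z') \in \p B(\mu) \times \{x_1,y_1,x_2,y_2,\ldots\}$ obtained by deleting the entry in the unique cell of $[\nu/\mu]$ and recording its weight as $z'$. With this notation the repeated insertion is the alternating sequence $(T_0,z_0) = (T,z)$, $S_n = \psi_\mu(T_n,z_n)$, and $(T_{n+1},z_{n+1}) = \rho_\mu(S_n)$ whenever $S_n \notin \p R(\mu,\lambda)$, stopped at the first $S_N \in \p R(\mu,\lambda)$. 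Weight-preservation is immediate: $\psi_\mu$ preserves weight by Theorem \ref{insert}, and so does $\rho_\mu$, since deleting an entry of weight $z'$ from $S$ leaves a tableau $T'$ with $w(S) = w(T')\,z'$, which is exactly the weight of the pair $(T',z')$. Thus every $S_n$ carries the same monomial, and the output has the same weight as the input.

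First I would establish termination. Fix the common weight monomial $M$ of the whole trajectory. Every variable index occurring in a factor of $M$ is bounded, so any bicolored tableau $S$ with $w(S) = M$ has all entries bounded (a black $r$ in row $i$ contributes $x_{i+r}$ and a red $r$ in column $j$ contributes $y_{j+r}$, forcing $i+r$ and $j+r$ to be at most the largest index dividing $M$). Since each intermediate shape $\nu$ covers $\mu$, there are only finitely many admissible shapes, hence only finitely many tableaux of weight $M$ that can appear; the trajectory lives in this finite set. It therefore suffices to rule out cycles. For this I would prove two lemmas: (i) reducing a tableau that is \emph{not} in $\p R(\mu,\lambda)$ never produces a pair in $\p B(\mu,\lambda) \times \p W(\mu,\lambda)$, so the initial tableau $S_0$ is a ``source'' with no admissible predecessor; and (ii) each step is reversible on the states that actually occur. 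Together these make the trajectory a simple path in a finite set, which must end in $\p R(\mu,\lambda)$.

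For bijectivity I would exhibit the inverse directly. Given $U \in \p B(\nu,\lambda) \subseteq \p R(\mu,\lambda)$, apply $\psi_\mu^{-1}$ to obtain a pair; if it already lies in $\p B(\mu,\lambda) \times \p W(\mu,\lambda)$ we stop, and otherwise we must undo a reduction, that is, re-insert the recorded variable at an outer corner of $\mu$ so as to recover the non-admissible tableau of the previous step, and then apply $\psi_\mu^{-1}$ again. Matching these reverse steps against the forward ones, and invoking termination for the reverse direction (again by finiteness), shows that the two procedures are mutually inverse and that the forward procedure lands in $\bigcup_\nu \p B(\nu,\lambda)$ exactly, which gives the asserted weight-preserving bijection.

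The main obstacle is the well-definedness of this un-insertion, equivalently the injectivity needed in (ii). The reduction $\rho_\mu$ is genuinely \emph{not} injective on all non-admissible tableaux: a single pair can be the reduction of two distinct tableaux outside $\p R(\mu,\lambda)$, one obtained by adding the deleted cell in a row and one by adding it in a column. What must be shown is that this ambiguity never arises for the pairs that are actually reached from an input; the ``phantom'' collisions sit at pairs that no trajectory visits (in the examples, at pairs $(T',z')$ with $z' \notin \p W(\mu,\lambda)$). Pinning down which configurations are reachable, and verifying that for each of them exactly one of the two candidate predecessors fails the $\lambda$-bound, is the delicate heart of the argument; I expect to settle it by analysing the greedy rule defining $\psi_\mu$ (the ``largest $i$''/``largest $j$'' choice), which forces the deleted corner of $S_n = \psi_\mu(T_n,z_n)$ into an extremal position and thereby makes its recovery unique. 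This, together with lemma (i), is what I would carry out in detail in Section \ref{proofs}.
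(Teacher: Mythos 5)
Your architecture is the same as the paper's: iterate $\psi_\mu$ and the reduction map, note that the whole trajectory lives in a finite set, and reduce termination and bijectivity to properties of the reduction restricted to the non-admissible tableaux that actually occur. The paper packages this as sieve equivalence with $A = \p B(\mu,\lambda)\times\p W(\lambda)$, $B = \psi_\mu(A)$, $X = \p B(\mu,\lambda)\times\p W(\mu,\lambda)$, $Y = \bigcup_\nu\p B(\nu,\lambda)$, and your lemmas (i) and (ii) are exactly the statements that the reduction $g$ maps $B\setminus Y$ into $A\setminus X$ injectively. But those statements --- together with the surjectivity of $g\colon B\setminus Y\to A\setminus X$ that your inverse procedure silently requires, and the fact that reinsertion keeps the tableau inside $\p B(\mu,\lambda)$ and the bumped-out variable inside $\p W(\lambda)$ --- \emph{are} the proof. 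All of them are established in the paper by a careful analysis of the insertion path and, crucially, of the condition $\lambda_k - k \neq \mu_i - i$ defining $\p W(\mu,\lambda)$, a condition your proposal never engages with. Deferring all of this leaves the substance of the theorem unproved.

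Moreover, the sketch you give of how the deferred part will go is not correct. The colliding pairs $(T',z')$ are not ``pairs that no trajectory visits'': by your own lemma (i), the intermediate pairs of a trajectory are pairs with $z'\notin\p W(\mu,\lambda)$, and these are exactly where the collisions sit. Nor is it true that ``exactly one of the two candidate predecessors fails the $\lambda$-bound.'' In the paper's own example ($\mu = 431$, $\lambda = 765521$, variable $x_4$) there are \emph{four} candidate predecessors, one for each outer corner of $\mu$, and \emph{two} of them (the entries at $(1,5)$ and $(2,4)$) violate the bound \eqref{crit}. What is actually true, and what must be proved, is that at most one non-admissible candidate lies in the image of $\psi_\mu$ applied to $\p B(\mu,\lambda)\times\p W(\lambda)$; the paper derives this from the fact that in a genuine insertion the offending variable was bumped out of a cell of $[\mu]$ that \emph{did} satisfy \eqref{crit}, which forces the landing position into a unique column. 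So the uniqueness you need comes from reachability under $\psi_\mu$, not from the $\lambda$-bound alone, and the case analysis you propose would fail as stated.
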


The last theorem proves \eqref{eq4} and hence the hook-length formula for skew shapes, equation \eqref{eq}.

\medskip

The proof of Theorem \ref{repeated} is also presented in Section \ref{proofs}.

\section{Proofs} \label{proofs}

\subsection*{Proof of Lemma \ref{w}}

Recall that we have $x_k = \lambda_k - k + \frac 1 2$ and $y_k = \lambda'_k - k + \frac 1 2$. Define also $x'_i = \mu_i - i + \frac 1 2$ and $y'_j = \mu'_j - j + \frac 1 2$. We are interested in the expression
$$|\lambda| - |\mu| = x_1 + \cdots + x_{r(\lambda)} + y_1 + \cdots + y_{r(\lambda)} - x'_1 - \cdots - x'_{r(\mu)} - y'_1 - \cdots - y'_{r(\mu)}.$$

Let us study the sequence of cells
\begin{multline*}
\ldots \to (k-2,\lambda_k - 2)\to (k-2,\lambda_k - 1) \to (k-1,\lambda_k - 1) \to (k-1,\lambda_k) \\ \to (k,\lambda_k) \to (k,\lambda_k+1) \to (k+1,\lambda_k+1) \to (k+1,\lambda_k+2) \to (k+2,\lambda_k+2) \to \ldots
\end{multline*}

\medskip
We are interested in $u$, the first cell in the sequence that has positive coordinates and is not in $[\mu]$. One option is that $u = (k+l,\lambda_k+l+1)$ for some $l \in \Z$. In that case, either $(k+l,\lambda_k+l) \in [\mu]$ or $\lambda_k + l = 0$. In both cases, $\mu_{k+l} = \lambda_k+l$. So we have $\lambda_k - k = \mu_i - i$ and $x_k = x'_i$ for $i = k+l$. The converse also holds: if $x_k = x'_i$ for some $i$, then $ \lambda_k - k = \mu_i - i$ and $u = (i,\lambda_k+i-k+1)$. The other option is that $u = (k+l,\lambda_k+l)$ for some $l$. Now either $(k+l-1,\lambda_k+l) \in [\mu]$ or $k+l-1 = 0$. In both cases, $\mu'_{\lambda_k+l} = k+l-1$. So $\mu'_j = k+j-\lambda_k-1$, $\lambda_k - k + 1 = j - \mu'_j$ and $x_k = -y'_j$ for $j = \lambda_k+l$. Conversely, if $x_k = -y'_j$ for some $j$, then $\lambda_k - k + 1 = j - \mu'_j$ and $u = (k+j-\lambda_k,j)$.

\medskip

We have seen that either $x_k = x'_i$ or $x_k = -y'_j$ for some (unique) $i$ or $j$, but not both. Furthermore, if $k \leq r(\lambda)$, then $x_k \geq 0$, so $x_k = x'_i$ implies $i \leq r(\mu)$ and $x_k = -y'_j$ implies $j > r(\mu)$. Similarly, if we study the sequence of cells
\begin{multline*}
\ldots \to (k-2,\lambda_k - 2)\to (k-1,\lambda_k - 2) \to (k-1,\lambda_k - 1) \to (k,\lambda_k-1) \\ \to (k,\lambda_k) \to (k+1,\lambda_k) \to (k+1,\lambda_k+1) \to (k+2,\lambda_k+1) \to (k+2,\lambda_k+2) \to \ldots,
\end{multline*}
then we see that either $y_k = y'_j$ or $y_k = -x'_i$ for some (unique) $j$ or $i$, but not both. Furthermore, if $k \leq r(\lambda)$, then $y_k \geq 0$, so $y_k = y'_j$ implies $j \leq r(\mu)$ and $y_k = -x'_i$ implies $i > r(\mu)$.

\medskip

Recall that $\lambda$ and $\mu$ are arbitrary partitions (i.e., we do not assume that $\mu \subseteq \lambda$). So we can switch the roles of $\lambda$ and $\mu$ in the above computations, and express $x'_i$ and $y'_j$ in terms of $x_k$'s and $y_k$'s.

\medskip

After we express $x'_i$'s and $y'_j$'s in terms of $x_k$'s and $y_k$'s, the coefficient of $x_k$ in $|\lambda| - |\mu|$ is:
\begin{itemize}
 \item $1$ if $k \leq r(\lambda)$ and there is no $i$ so that $x_k = x'_i$
 \item $0$ if $k \leq r(\lambda)$ and $x_k = x'_i$ for some $i$ (necessarily $i \leq r(\mu)$)
 \item $1$ if $k > r(\lambda)$ and $x_k = - y'_i$ for some $i$ (necessarily $i \leq r(\mu)$); equivalently, if there is no $i$ so that $x_k = x'_i$
 \item $0$ if $k > r(\lambda)$ and there is no $i$ so that $x_k = -y'_i$; equivalently, if $x_k = x'_i$ for some $i$ (necessarily $i > r(\mu)$)
\end{itemize}

To summarize, $x_k$ appears as a term in $|\lambda| - |\mu|$ if and only if there is no $i$ so that $x_k = x'_i$, which is equivalent to $\lambda_k - k = \mu_i - i$. Similarly, we see that $y_k$ appears as a term in $|\lambda| - |\mu|$ if and only if there is no $j$ so that $y_k = y'_j$, which is equivalent to $\lambda'_k - k = \mu'_j - j$. This finishes the proof of Lemma \ref{w}.

\subsection*{The insertion process and the proof of Theorem \ref{insert}}

In this subsection, we prove the technical properties of the insertion process $\psi_\mu$, including Theorem \ref{insert}.

\medskip

Say that we are at a certain step of the insertion process, and that a black $r$ was just bumped from position $(i',j-1)$ (analysis for a red $r$ is analogous). The algorithm says that we should find the largest possible $i$ so that we can write $r + i' - i$ in position $(i,j)$ while keeping the column $j$ weakly increasing. Note that since the sequence $(S_{ij})_{i=0}^{\mu'_j+1}$ (where we interpret $S_{0,j}$ as $0$ and $S_{\mu'_j+1,j}$ as $\infty$) is weakly increasing, $(S_{ij} + i - i')_{i=0}^{\mu'_j+1}$ is strictly increasing, and we have to find the largest possible $i$ so that if we replace the $i$-entry of the sequence with $r$, we still have a strictly increasing sequence.

\medskip

It is clear that if $r = S_{ij} + i'' - i'$ for some $i''$, we have just one choice for $i$, so we pick $i = i''$, and if $ S_{i''-1,j} + i'' - 1 - i' < r < S_{i'',j} + i'' - i'$ for some $i''$, we have two choices for $i$, $i''-1$ and $i''$, and we pick the larger one, $i = i''$. So $i$ is indeed well defined, and $S_{ij} + i - i' \geq r$. Furthermore, if $\mu'_{j-1} = \mu'_j$, then $i' \leq \mu_j'$, $r \leq S_{\mu'_j,j} \leq S_{\mu'_j,j} + \mu_j' - i'$ and the chosen $i$ is not $\mu'_j+1$. That means that we only add a cell $(\mu_j'+1,j)$ to the tableau (and terminate the process) when $\mu'_{j-1} > \mu'_j$, i.e.~when $(\mu_j'+1,j)$ is an outer corner of $\mu$.

\medskip

Since $r \leq S_{i',j} < S_{i'',j} + i'' - i'$ for $i'' > i'$, we always have $i \leq i'$. In other words, when the process moves by one to the right, it cannot go down (and when the process moves down by one, it cannot go to the right). Furthermore, we notice that the number bumped by $r$ is $S_{ij} \geq r + i' - i \geq r$, i.e., a number is never bumped by a strictly larger number (also when bumping a red number).

\medskip

That means that the new entry in position $(i,j)$ is still less than or equal to $S_{i,j+1}$ (which we take to be $\infty$ if $(i,j+1)$ is not in the diagram). Furthermore, since $S_{i,j-1} \leq S_{i',j-1} = r$, and since the new entry in position $(i,j)$ is at least as large as $r$, the new entry in position $(i,j)$ is still greater than or equal to $S_{i,j-1}$. In other words, the new tableau still has weakly increasing rows and columns.

\medskip

In order to prove that the process terminates, it is enough to prove that a certain (integer) quantity with an upper bound increases at each step. We claim that such a quantity is $i + j + s$, where $(i,j)$ is the current position and $s$ is the number getting bumped. It is clear that the quantity is bounded by $\ell(\mu) + \mu_1 + \max T_{ij}$. Also, in the same notation as before, the quantity was $i' + (j-1) + r$ in the previous step and is $i + j + S_{ij} \geq i' + j + r$ in the current step, so it increases by at least $1$ (a similar proof can be written for the case when we are bumping a red number).

\medskip

It remains to construct the inverse of the process. Start with a bicolored tableau $T \in \p B(\nu)$, where $\mu \lessdot \nu$. Assume that after some number of steps, we have $i$, $j$, $S$ and $w$; in the beginning, $(i,j)$ is the unique cell in $[\nu/\mu]$, $S$ is $T$ with the entry in $(i,j)$ removed, and $w$ the variable corresponding to that entry ($x_{i+T_{ij}}$ if the entry is black and $y_{j+T_{ij}}$ if it is red). If $w = x_k$, we decrease $j$ by $1$, i.e., we move by one to the left, and, again, we find the largest possible $i$ so that we can replace $S_{ij}$ by a black $k - i$ in position $(i,j)$ and still have a weakly increasing column with non-negative integers. If $w = y_k$, the process is analogous. It is easy to prove that this process is well defined, terminates and is the inverse of $\psi_\mu$.

\medskip

The fact that the $\psi_\mu$ is weight preserving is obvious, as the weight (including the bumped variable) is preserved at each bump.

\subsection*{The repeated insertion process and the proof of Theorem \ref{repeated}}

In this subsection, we prove the technical properties of the repeated insertion process $\Psi_{\mu,\lambda}$.

\medskip

Suppose we have finite sets $A$ and $B$ and a bijection $f \colon A \to B$. Furthermore, suppose we have subsets $X \subseteq A$ and $Y \subseteq B$ and a bijection $g \colon B \setminus Y \to A \setminus X$. For $x \in X$, let $m$ be the smallest (and only) non-negative integer such that $f \circ (g \circ f)^m(x) \in Y$, and define $h(x) = f \circ (g \circ f)^m(x)$. It is easy to see that $h \colon X \to Y$ is a well-defined bijection. Furthermore, if we have weights $w_A \colon A \to K$, $w_B \colon B \to K$ and $f$ and $g$ are weight preserving (i.e., $w_B(f(a)) = w_A(a)$ for all $a \in A$ and $w_A(g(b)) = w_B(b)$ for all $b \in B \setminus Y$), then $h$ is also weight preserving, i.e., $w_B(h(x)) = w_A(x)$ for all $x \in X$. See for example \cite[\S 2.6]{ec1}, where the process is called \emph{sieve equivalence}. It is also similar to the well-known Garsia-Milne \emph{involution principle} \cite{gm}. Note that the involution principle was used, for example, in the proof of the hook-length formula \cite{krat1} and in the first bijective proof of the hook-content formula in \cite{rw}.

\medskip

Define $\p W(\lambda) = \{x_1,\ldots,x_{\ell(\lambda)},y_1,\ldots,y_{\lambda_1}\}$, the set of all possible variables that can appear in $\p L(\mu,\lambda)$ and $\p R(\mu,\lambda)$.

\medskip

In our context, we define the following:
\begin{itemize}
 \item $A$ is the (finite) product $\p B(\mu,\lambda) \times \p W(\lambda)$,
 \item $B$ is the image $\psi_\mu(A) \subseteq \bigcup_\nu \p B(\nu)$,
 \item $f$ is the (bijective) map $\psi_\mu$, insertion of a variable into a bicolored tableau of shape $\mu$,
 \item $g$ is the map $\varphi_\mu$, which takes a bicolored tableau $T \in \p B(\nu)$, $\mu \lessdot \nu$, and produces the pair $(S,z)$, where $S$ is $T$ with the unique entry in position $(i,j) \in [\nu/\mu]$ removed, and $z$ is the variable corresponding to the removed entry ($x_{i+T_{ij}}$ if the removed entry is black, and $y_{j+T_{ij}}$ if it is red),
 \item $X$ is $\p L(\mu,\lambda)$, i.e., $\p B(\mu,\lambda) \times \p W(\mu,\lambda)$,
 \item $Y$ is $\p R(\mu,\lambda)$, i.e., $\bigcup_\nu \p B(\nu,\lambda)$.
\end{itemize}

Note that $g$ is not bijective. For example, the following tableaux all give the same tableau of shape $431$ and the variable $x_4$ upon removal of the entry in cell $(1,5)$ (respectively, $(2,4)$, $(3,2)$, $(4,1)$).

\medskip

 \begin{center}
   \ytableausetup{boxsize=1.25em}
 \ytableausetup{aligntableaux=top}
 \ytableaushort
{0{\rd 0}003,01{\rd 1},0}
\qquad\qquad
   \ytableausetup{boxsize=1.25em}
 \ytableausetup{aligntableaux=top}
 \ytableaushort
{0{\rd 0}00,01{\rd 1}2,0}
\qquad\qquad
   \ytableausetup{boxsize=1.25em}
 \ytableausetup{aligntableaux=top}
 \ytableaushort
{0{\rd 0}00,01{\rd 1},01}
\qquad\qquad
   \ytableausetup{boxsize=1.25em}
 \ytableausetup{aligntableaux=top}
 \ytableaushort
{0{\rd 0}00,01{\rd 1},0,0}
\end{center}

\medskip

However, for the sieve equivalence to work, it is enough that $g$ is bijective when restricted to $B \setminus Y$.

\begin{lemma}
 The restriction of $g$ to $B \setminus Y$ is injective, and its image is $A \setminus X$. Furthermore, $f$ and $g$ are weight preserving, and hence $h$ is weight preserving. In other words, $\Psi_{\mu,\lambda}$ is a well-defined weight-preserving bijection.
\end{lemma}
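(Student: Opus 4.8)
The plan is to verify the three hypotheses of the sieve-equivalence framework already set up in the paragraph preceding the lemma, namely (i) that $g = \varphi_\mu$ restricted to $B \setminus Y$ is injective with image exactly $A \setminus X$, (ii) that $f = \psi_\mu$ and $g$ are weight preserving, and then invoke the abstract sieve-equivalence statement to conclude that $h = \Psi_{\mu,\lambda}$ is a well-defined weight-preserving bijection from $X = \p L(\mu,\lambda)$ to $Y = \p R(\mu,\lambda)$. The weight-preservation of $f$ is already established (it is the last sentence of the proof of Theorem \ref{insert}: the total weight, including the bumped variable, is preserved at each bump), and the weight-preservation of $g$ is immediate from its definition, since removing an entry and recording the corresponding variable $x_{i+T_{ij}}$ or $y_{j+T_{ij}}$ splits $w(T)$ exactly into $w(S) \cdot z$. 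So the real content is part (i).

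To handle injectivity and the image of $g$ on $B \setminus Y$, I would first pin down what $B \setminus Y$ and $A \setminus X$ actually are. By definition $A = \p B(\mu,\lambda) \times \p W(\lambda)$ and $X = \p B(\mu,\lambda) \times \p W(\mu,\lambda)$, so $A \setminus X = \p B(\mu,\lambda) \times (\p W(\lambda) \setminus \p W(\mu,\lambda))$; the key point is to characterize the ``forbidden'' variables $\p W(\lambda) \setminus \p W(\mu,\lambda)$ as precisely those $x_k$ with $\lambda_k - k = \mu_i - i$ for some $i$ (and similarly for $y_k$), which is exactly the complement of the defining condition of $\p W(\mu,\lambda)$. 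On the other side, $Y = \bigcup_\nu \p B(\nu,\lambda)$ and $B = \psi_\mu(A) \subseteq \bigcup_\nu \p B(\nu)$, so a tableau in $B \setminus Y$ lies in some $\p B(\nu)$ with $\mu \lessdot \nu$ but violates the containment condition $j + T_{ij} \le \lambda_{i+T_{ij}}$ of \eqref{crit} in at least one cell. The crucial observation is that the unique added cell of $[\nu/\mu]$ is where the violation must occur: I would argue that since the pre-image tableau already lies in $\p B(\mu,\lambda)$, the only cell that can push a tableau out of $\p B(\cdot,\lambda)$ under a single insertion is the newly created outer corner, so removing that entry (which is what $g$ does) lands us back in $\p B(\mu,\lambda)$, and the recorded variable is necessarily a forbidden one. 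Conversely, inserting a forbidden variable into a tableau of $\p B(\mu,\lambda)$ produces a tableau outside $\p B(\cdot,\lambda)$, giving the reverse inclusion for the image.

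For injectivity of $g$ on $B \setminus Y$, the example in the excerpt shows that $g = \varphi_\mu$ is globally non-injective precisely because several tableaux (with entries in a whole diagonal strip extending the corner) collapse to the same pair. The plan is to show that these collisions cannot happen inside $B \setminus Y$: among the collapsing tableaux, at most one has its added corner cell as the unique cell violating \eqref{crit} while all other cells satisfy the containment bound, so at most one of the collapsing pre-images lies in $B \setminus Y$. Concretely, I would compare two tableaux $T_1, T_2 \in \p B(\nu_1), \p B(\nu_2)$ with $\varphi_\mu(T_1) = \varphi_\mu(T_2) = (S,z)$; they differ only in which cell of a diagonal carries the ``extra'' offset, and the containment condition \eqref{crit} selects a unique one of them as belonging to $B \setminus Y$ given that $S \in \p B(\mu,\lambda)$ and $z$ is forbidden.

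The main obstacle I anticipate is the bookkeeping in part (i): correctly matching the forbidden-variable condition of $\p W(\lambda) \setminus \p W(\mu,\lambda)$ with the single-cell containment violation detected by \eqref{crit}, and proving the ``at most one collapsing pre-image survives in $B \setminus Y$'' claim that rescues injectivity from the non-injectivity of $g$. This is where Lemma \ref{w}'s dictionary between the arithmetic conditions $\lambda_k - k = \mu_i - i$ and the geometry of outer corners does the heavy lifting, and I would lean on that correspondence to translate the variable conditions into statements about which outer corner of $\mu$ can be added while staying inside (or exiting) $[\lambda]$. Once this matching is nailed down, the rest follows formally from the abstract sieve-equivalence principle cited from \cite[\S 2.6]{ec1}.
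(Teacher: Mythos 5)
Your overall framework (verify the sieve-equivalence hypotheses, with the real content being that $g|_{B\setminus Y}$ is a bijection onto $A\setminus X$) matches the paper, and your treatment of weight preservation is fine. But two of your three key claims in part (i) are not just unproved — they are false as stated. First, your surjectivity argument asserts that ``inserting a forbidden variable into a tableau of $\p B(\mu,\lambda)$ produces a tableau outside $\p B(\cdot,\lambda)$.'' The paper's own worked example refutes this: $y_4$ is a forbidden variable for $\mu=431$, $\lambda=765521$ (since $\lambda'_4-4=0=\mu'_2-2$), yet inserting it into $T'\in\p B(431,765521)$ lands in $\p B(531,765521)\subseteq Y$. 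Indeed, if $f$ mapped $A\setminus X$ into $B\setminus Y$, the repeated insertion process could never terminate once it left $X$. Moreover, even where true, this statement proves the wrong thing: surjectivity of $g|_{B\setminus Y}$ requires, for each $(T,z)\in A\setminus X$, a $\varphi_\mu$-preimage lying in $B\setminus Y$, and $\varphi_\mu$ is not the inverse of $\psi_\mu$. The paper constructs this preimage by hand — placing the entry $k-i'$ in the specific outer corner $(i',j)$ with $j=\mu_i+1$, $i'=\mu'_j+1$ where $\lambda_k-k=\mu_i-i$ — and then must separately verify that the resulting tableau is in $B=\psi_\mu(A)$ by running the inverse insertion and checking it lands in $\p B(\mu,\lambda)\times\p W(\lambda)$. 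That last verification is a genuine step your plan has no counterpart for.

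Second, your injectivity argument claims that among the colliding $\varphi_\mu$-preimages ``at most one has its added corner cell as the unique cell violating \eqref{crit}.'' The paper's own illustration of the non-injectivity of $g$ (the four tableaux of shape $\nu\gtrdot 431$ all yielding $x_4$) shows two of the four corners violating \eqref{crit} ($5+3>\lambda_4$ and $4+2>\lambda_4$). What actually singles out at most one preimage is membership in $B=\psi_\mu(A)$: a tableau in $B$ has its corner entry bumped from the previous column, where that same variable satisfied \eqref{crit}, and the paper derives a contradiction from two distinct corners $(i,j)$, $(i',j')$ with $j<j'$ by combining the violation $\lambda_k<j+k-i$ with the satisfaction $\lambda_k\geq j'+k-i''-1$ at the bump source. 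The same ``satisfied one column to the left, violated at the corner'' sandwich is also what proves $g(B\setminus Y)\subseteq A\setminus X$ (it forces $\mu_{i''}-i''=\lambda_k-k$ for some $i''$ in the flat run of $\mu$); this is a direct argument about the insertion path, not an application of Lemma~\ref{w}. In short, the distinction between ``violates \eqref{crit} at the corner'' and ``lies in $\psi_\mu(A)\setminus Y$'' is where all the difficulty sits, and your plan does not engage with it.
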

\begin{proof}
 We proved in the previous subsection that if we insert a variable into a tableau of $\mu$, the entries in $[\mu]$ are smaller than or equal to their previous values. So after inserting a variable from $\p W(\lambda)$ into $T \in \p B(\mu,\lambda)$ and removing the new corner, we again get a tableau in $\p B(\mu,\lambda)$. In other words, $\varphi_\mu \circ \psi_\mu (\p B(\mu,\lambda) \times \p W(\lambda)) \subseteq \p B(\mu,\lambda) \times \p W(\lambda)$, so indeed $g \colon B \to A$.\\
 We need to prove that $g$ restricted to $B \setminus Y$ maps to $A \setminus X$. In other words, we have to prove that if $\psi_\mu(T,z) \notin \bigcup_\nu \p B(\nu,\lambda)$, then $\varphi_\mu \circ \psi_\mu(T,z) \notin \p B(\mu,\lambda) \times \p W(\mu,\lambda)$.\\
The assumption is that after we insert $z$ into $T$, the result $S \in \p B(\nu)$ is not in $\p B(\nu,\lambda)$. In other words, the entry of $S$ in position $(i,j) \in [\nu/\mu]$ is too large, i.e., we have $j + S_{ij} > \lambda_{i + S_{ij}}$ (this includes the case when $\nu \not\subseteq \lambda$). Assume that the entry in position $(i,j)$ is black (the analysis for a red entry is analogous) and that it represents the variable $x_k$ (so $S_{ij} +i = k$). We have $j + k - i > \lambda_k$, i.e.~$j-1-i \geq \lambda_k - k$.\\
The variable $x_k$ was bumped from the previous column, say from position $(i',j-1)$, where $i' \geq i$. Since $(i',j-1) \in [\mu]$ and $(i,j)$ is an outer corner of $\mu$, we have $\mu_i = \mu_{i+1} = \ldots = \mu_{i'} = j-1$, $\mu'_{j-1} \geq i'$ and $\mu'_j = i-1$. We want to prove that $x_k$ is not an element of $\p W(\mu,\lambda)$.\\
Before getting bumped, the entry in $(i',j-1)$ was $k-i'$ (so that it represented the same variable $x_k$), and it satisfied $j-1+ k-i' \leq \lambda_{i'+k-i'} = \lambda_k$, see \eqref{crit}. In other words, we have $j-1-i'\leq \lambda_k - k \leq j - 1 - i$. But that means that for $i''=j-1+k-\lambda_k$, $i \leq i'' \leq i'$, we have $\mu_{i''} - i'' = (j-1) - (j-1+k-\lambda_k) = \lambda_k - k$, so $x_k \notin \p W(\mu,\lambda)$.\\
  %\leq j+k-i-1$. This is equivalent to $i \leq i'' \leq i'$, where $i'' = k-\lambda_k+j-1$, and we know that this implies $\mu_{i''} = j-1$.\\
  %If $\lambda_k \geq k$ (equivalently, $k \leq r(\lambda)$), then $i'' = k-\lambda_k + j - 1 \leq \mu_{i''} = j-1$, so $i'' \leq r(\mu)$. That means that one of the sequences of steps used in the construction of $\p W(\mu,\lambda)$ is $(i'',j-1) \to (i'',j) \to (i''+1,j) \to \ldots$, and because $j-1 \leq j-1+k-i' \leq \lambda_k$, one of the steps in this sequence is $(i''+\lambda_k-j+1,j-1+\lambda_k-j+1) \to (i''+\lambda_k-j+1,j+\lambda_k-j+1)$, i.e., $(k,\lambda_k) \to (k,\lambda_k+1)$. Of course, this is a step from $[\lambda]$ to $[\lambda]^k$, which implies that $s_{i''} = x_k$, and so $x_k \notin \p W(\mu,\lambda)$.\\
  %On the other hand, assume that $\lambda_k < k$ (equivalently, $k > r(\lambda)$) and that there is a sequence of steps $(\mu'_{j'},j') \to (\mu'_{j'}+1,j') \to (\mu'_{j'}+1,j'+1) \to \ldots$, where $j \leq r(\mu)$, that eventually passes from $[\lambda]$ to $[\lambda^C]$ in row $k$. Since that step must be $(k,\lambda_k) \to (k,\lambda_k+1)$, we get that $k - \mu'_{j'} = \lambda_k + 1 - j'$ and $\lambda_k = k - \mu'_{j'} + j' - 1$. If $j' \leq j - 1$, then $\mu'_{j'} \geq \mu'_{j-1} \geq i'$ and $\lambda_k \leq k - i' + j - 2$, which contradicts $\lambda_k \geq j+k-i'-1$, and if $j' \geq j$, then $\mu'_{j'} \leq \mu'_{j} = i - 1$ and $\lambda_k \geq k - i + j$, which contradicts $\lambda_k \leq j+k-i-1$.
We now need to prove that $g|_{B \setminus Y}$ is injective. In other words, we need to prove that if $\psi_\mu(T,z), \psi_\mu(T',z') \notin \bigcup_\nu \p B(\nu,\lambda)$ and $\varphi_\mu \circ \psi_\mu(T,z) = \varphi_\mu \circ \psi_\mu(T',z')$, then $T = T'$ and $z = z'$.\\
The assumption is saying that after we bump $z$ into $T$ and $z'$ into $T'$, we get bicolored tableaux which are of different shapes $\nu$ and $\nu'$, but the entries in the unique cells $(i,j)$ and $(i',j')$ of $[\nu/\mu]$ and $[\nu'/\mu]$ represent the same variable (without loss of generality, $x_k$ for some $k$), and after deleting these entries, we get the same bicolored tableau of shape $\mu$. Without loss of generality, $j < j'$.\\
Furthermore, the variable $x_k$ is too big for either position $(i,j)$ or $(i',j')$. We saw earlier in this proof that we must have $\lambda_k < j+k-i$ and $\lambda_k \geq j'+k-i''-1$, where the variable $x_k$ was bumped from position $(i'',j'-1)$ to $(i',j')$ in the insertion of $z'$ into $T'$. However, $j \leq j'-1$ and $i \geq \mu_j' + 1 \geq \mu'_{j'-1} + 1\geq i''+1$, so $j+k-i \leq j'-1+k-i''-1 < j'+k-i''-1$, which is a contradiction.\\
Finally, we have to prove that  $g \colon B \setminus Y  \to A \setminus X$ is surjective. In other words, we have to prove that if $(T,z) \in \p B(\mu,\lambda) \times (\p W(\lambda) \setminus \p W(\mu,\lambda))$, then there exists $(T',z')$ such that $\psi_\mu(T',z') \notin \bigcup_\nu \p B(\nu,\lambda)$ and $\varphi_\mu \circ \psi_\mu(T',z') = (T,z)$.\\
We assume that $z = x_k \in \p W(\lambda) \setminus \p W(\mu,\lambda)$ for some $k$; the analysis for $z = y_k$ is very similar and is left as an exercise for the reader.\\
For some $i$, we have $\lambda_k - k = \mu_i - i$. Write $j = \mu_i + 1$ and $i' = \mu'_j+1 \leq i$. Place a black $k-i'$ in position $(i',j)$ and denote the resulting tableau $S$. We claim that $S$ is a bicolored tableau and that $(T',z') = \psi_\mu^{-1}(S)$ satisfies the required properties.\\
For $l > k - i$, $j-1 + l \leq \lambda_{i+l}$ is not satisfied; indeed, in this case $j-1 + l > j-1 + k - i = \lambda_k \geq \lambda_{i+l}$. In other words, $S_{i,j-1} = T_{i,j-1} \leq k-i$ and so $S_{i',j-1} = T_{i',j-1} \leq k-i$. That means that writing $k-i' \geq k-i$ in position $(i',j)$ does not create a decrease in row $i'$. Also, for $l > k - i'$, $j+l \leq \lambda_{i'-1+l}$ is not satisfied, as $j + l > j + k - i' \geq j + k - i > \lambda_k \geq \lambda_{i'-1+l}$. So $S_{i'-1,j} = T_{i'-1,j} \leq k - i'$ and writing $k-i'$ in position $(i',j)$ does not create a decrease in column $j$. We have proved that $S$ is indeed a bicolored tableau. Let us denote its shape by $\nu$, so $(i',j)$ is the only cell in $[\nu/\mu]$. Write $(T',z') = \psi_\mu^{-1}(S)$.\\
We claim that $S = \psi_\mu(T',z')$ is not in $\p B(\nu,\lambda)$, i.e.~that $k-i'$ is too large for position $(i',j)$. Indeed, $j+k-i' \geq j+k-i > \lambda_k = \lambda_{i'+k-i'}$.\\
We also claim that $T'$ is in $\p B(\mu,\lambda)$. When we start the inverse insertion process, we put the variable $x_k$ into column $j-1$ of $S$. However, we saw that $S_{i',j-1} \leq \ldots \leq S_{i,j-1} \leq k - i$, and since we write the variable $x_k$ in position $(i'',j-1)$, where $i' \leq i'' \leq i$, as a black $k - i''$, we must have $i'' = i$. We therefore have $k-i$ in position $(i,j-1)$, which is, by $j-1+k-i = \lambda_k$, not too large for the result to not be in $\p B(\mu,\lambda)$. Continuing with the reverse insertion process does not change that fact: if $x_k$ comes from a black $k-i$ in position $(i,j) \in [\mu]$, $j + k - i \leq \lambda_k$, and lands in $(i',j-1)$, $i' \geq i$, as a black $k-i'$, then $j-1+k-i' < j+k-i \leq \lambda_k$ (and a similar proof for $y_k$). Furthermore, the variable $z'$ will obviously be in $\p W(\lambda)$.
\end{proof}

The lemma proves Theorem \ref{repeated}.

\section{Final remarks} \label{final}

\subsection*{Comparison of Naruse's formula with others} Naruse's formula seems better for many applications, e.g.~asymptotics; see for example \cite[Section 9]{mpp} and \cite{mppasymp}. This paper presents another advantage: it has a natural bijective proof.

\subsection*{Connection to Ikeda-Naruse's formula} It was pointed out by Morales and Panova (personal communication, see also \cite{mpp3}) that in \cite[equation (5.2)]{IN1}, Ikeda and Naruse proved algebraically that for a skew shape $\lambda/\mu$ that fits inside a $d \times (n-d)$ box,
$$(F(\lambda/1) - F(\mu/1)) F(\lambda/\mu) = \sum_{\mu \lessdot \nu \subseteq \lambda} F(\lambda/\nu),$$
where
$$F(\lambda/\mu) = \sum_{D \in \p E(\lambda/\mu)} \prod_{(i,j) \in D} (z_{\lambda_i + d - i + 1} - z_{d+j-\lambda'_j}).$$
In particular,
$$F(\lambda/1) = \sum_{i = 1}^{r(\lambda)} (z_{\lambda_i + d - i + 1} - z_{d+j-\lambda'_j}).$$
For $n$ and $d$ fixed, we introduce variables $x_{\lambda_i-i} = z_{\lambda_i+d-i+1}$, $1 \leq i \leq d$, and $y_{\lambda'_j-j} = -z_{d+j-\lambda'_j}$, $1 \leq j \leq n-d$ (we always have $\lambda_i+d-i+1 \neq d+j-\lambda'_j$ since the difference is the hook length of the cell $(i,j)$), and get precisely \eqref{eqmain}.

\subsection*{Bijective proof of Monk's formula}

It was pointed out by Sara Billey that formula \eqref{eqmain} is similar to Monk's formula for Schubert polynomials. Indeed, the double Schubert polynomial of a permutation $w$ is the sum of $\prod_{(i,j) \in D} (x_i +y_j)$ over all RC-graphs $D$ for $w$. It would be interesting to see if there is a connection between our bijection and the bijective proof of Monk's formula from \cite{bergeronbilley}.

\subsection*{Skew shifted shapes}

An obvious question is how to adapt the bijection to prove the version of Naruse's hook-length formula for skew \emph{shifted} shapes. While one might expect that a version of such a bijection would be much more complicated than the one presented here, it turns out that the proof can be adapted without major difficulties. See \cite{shifted}.

\section*{Acknowledgments}

The author would like to thank Alejandro Morales, Igor Pak and Greta Panova for telling him about the problem and the interesting discussions that followed. Many thanks also to Sara Billey for reading an early draft of the paper so carefully and for giving numerous useful comments, to Darij Grinberg for a number of wonderful suggestions, in particular for the one leading to a simplification of the proof of Theorem \ref{repeated}, and to Graham Gordon for finding a typo in \eqref{oo}.

\bibliographystyle{siam}
%\bibliography{bibl}
\def\cprime{$'$}

\end{document}